\newtheorem{thm}{Theorem}
\newtheorem{prob}{Problem}
\newtheorem*{rem}{Remark}
\theoremstyle{definition}
\newtheorem{defi}{Definition}
\renewcommand{\bar}{\overline}
\newcommand{\Z}{{\mathbb{Z}}}
\newcommand{\Q}{{\mathbb{Q}}}
\newcommand{\R}{{\mathbb{R}}}
\newcommand{\C}{{\mathbb{C}}}
\newcommand{\T}{{\mathbb{T}}}
\title[Geom. cont. frac. and Anosov diffeo's]
{Geometric Continued Fractions\linebreak as Invariants\linebreak
in the Topological Classification\linebreak 
of Anosov Diffeomorphisms\linebreak of Tori}
\author[Grisha Kolutsky]{Grisha Kolutsky$^*$}
\date{22 June 2009}
\thanks{{\it Mathematics Subject Classification 2000.}
Primary 37C15, 11H06, 11J70\linebreak Secondary 37D20, 15A36.
\newline $^*$The work exposed here was
partially supported by grants 7-01-00017-a and\linebreak 08-01-00342-a of the
Russian Foundation for Basic Research, by the grant\linebreak No.~NSh-3038.2008.1
of the President of Russia for support of leading scientific schools
and by the Russian Universities grant No.~RNP.2.1.1.5055.}
\keywords{Anosov diffeomorphisms, hyperbolicity, topological
classification, geometric continued fractions, sails}
\address{\rm Grisha Kolutsky\newline \it
Department of the Theory of Dynamical Systems\newline
Faculty of Mechanics and Mathematics\newline
Lomonosov Moscow State University\newline
MSU, GSP, Glavnoe Zdanie, Leninskie Gory\newline
119899 Moscow, Russia\newline
{\rm e-mail:}  kolutsky@mccme.ru\newline}
\begin{document}

\begin{abstract}We show how an object from the combinatorially geometric
version of the analytical number theory, namely geometric continued
fractions, appears in the classical smooth dynamics, namely in the
problem on the topological classification of Anosov diffeomorphisms of
tori.
\end{abstract}

\maketitle
\tableofcontents

\newpage
\section{Introduction. The historical overview}
The problem on topological classification of Anosov diffeomorphisms
of $n$\nobreakdash-di\-me\-n\-sion\-al torus was first considered in
1960s. J.~Franks in 1969 and C.~Manning in 1973 proved that every
Anosov diffeomorphism of $n$-torus, $\T^{n}$ ($n>1$), is
topologically conjugate to a linear hyperbolic automorphism. It is
easy to see that two linear hyperbolic automorphisms are
topologically conjugate if and only if they are conjugate by a
linear automorphism. So initial problem was reduced to the linear
classification of the linear hyperbolic automorphisms of $\T^{n}$.

There were some algebraic attempts to solve the (reduced) algebraic question. We
mention some of them in the Section 3.

In the case $n=2$ a solution of the last problem goes back to Gauss and
Lagrange. A full invariant is a pair --- trace of the linear hyperbolic operator
and the period of a continued fraction for the slope of an eigenvector of the
operator (this statement seems to be well-known in analytical number theory, but
we know only one reference \cite{AKK}). A geometrical interpretation of this
invariant is the geometric continued fractions constructed by Klein (for
historical survey of the subject see \cite{Ka1}).

Our results presented here are linear classifications of the linear hyperbolic
automorphisms of $\T^{n}$ in the case $n>2$ using notations of geometric
continued fractions, namely Klein's and Klein-Voronoi's generalizations
(see the Theorem \ref{t0} and the Theorem \ref{t2}).

Recently many good results on the geometric continued fractions and related
questions were obtained by different scientists mainly due to interest of
V.~I.~Arnold to this subject. See \cite{Ka1} for further references.

\section{Basic notations and posing the problem}

In the definitions of Anosov maps and of topological conjugacy we are strictly
following the classical notations (see \cite{KH}).

Let us recall the principal definition.

\begin{defi}An operator $A\in GL_{n}(\Z)$ is said to be {\it linear hyperbolic
automorphism (of~$\T^{n}$)} if absolute values of all its eigenvalues are not
equal to~$1$.\end{defi}

Next questions are the main topic of our article:

\begin{enumerate}
\item {\it How to define if two Anosov diffeomorphisms $A$, $B$ of $\T^{n}$
are topologically conjugate?}
\item {\it Does there exist an \textbf{effective} algorithm, determining $A$,
$B$ for the existence of the conjugacy?}
\item {\it What is a complete invariant for the topological classification of
Anosov diffeomorphisms on tori?}
\end{enumerate}

\section{From the topological to the linear classification}

The following statement was the main step in the investigation on topological
classification of Anosov maps on tori.

\begin{thm}Every Anosov diffeomorphism $A$ of $\T^{n}$ is topologically
conjugate to a linear hyperbolic automorphism of $\T^{n}$, i.e. to the action
of~$A$ on the homology group $H_{1}(\T^{n})$.\end{thm}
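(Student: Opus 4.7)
The plan is to replace $f$ by a functional equation on the universal cover $\R^n$ of $\T^n$ and solve it by exploiting the hyperbolic splitting of the linear model $A$. Lift $f$ to $F\colon\R^n\to\R^n$; since any lift is equivariant with respect to the induced action $A$ on $\pi_1(\T^n)\cong\Z^n$, we have $F(x+m)=F(x)+Am$ for every $m\in\Z^n$, so $\phi:=A-F$ is $\Z^n$-periodic and in particular bounded on $\R^n$. I would seek a conjugacy of the form $H=\mathrm{id}+u$ with $u$ bounded and $\Z^n$-periodic; the requirement $H\circ F=A\circ H$ reduces to the linear cohomological equation
\begin{equation*}
u\circ F - A\,u = \phi.
\end{equation*}
Decomposing $\R^n=E^s\oplus E^u$ by the hyperbolic splitting of $A$ and splitting $u=u^s+u^u$ and $\phi=\phi^s+\phi^u$ accordingly, the equation decouples and is solved explicitly by
\begin{align*}
u^s(x) &= \sum_{k\ge 1} A|_{E^s}^{\,k-1}\,\phi^s(F^{-k}x),\\
u^u(x) &= -\sum_{k\ge 0} A|_{E^u}^{-(k+1)}\,\phi^u(F^{k}x),
\end{align*}
both convergent in the supremum norm since $A|_{E^s}$ and $A|_{E^u}^{-1}$ are linear contractions and $\phi$ is bounded. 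Periodicity of $\phi$ combined with the relation $F(x+m)=F(x)+Am$ forces $u$ to be $\Z^n$-periodic, so $H$ descends to a continuous map $h\colon\T^n\to\T^n$ satisfying $h\circ f=\bar A\circ h$.

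Since $u$ is bounded, $H$ is homotopic to the identity via $(x,t)\mapsto x+tu(x)$, and this homotopy descends to $\T^n$, so $h$ has degree one and is surjective. It remains to upgrade $h$ to a homeomorphism, which amounts to injectivity. For this I would invoke the Anosov structure of $f$ itself: if $H(\tilde x)=H(\tilde y)$, then applying $H\circ F=A\circ H$ inductively gives $H(F^n\tilde x)=H(F^n\tilde y)$ and hence $\|F^n\tilde x-F^n\tilde y\|\le 2\|u\|_\infty$ for every $n\in\Z$; projecting to $\T^n$ yields two $f$-orbits that remain globally close forever, and the expansivity-plus-shadowing theory for Anosov diffeomorphisms (see \cite{KH}) forces them to coincide.

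I expect this last injectivity step to be the main obstacle: the cohomological equation uses only hyperbolicity of the linear model $A$, whereas passing from a semi-conjugacy to a homeomorphism requires quantitative hyperbolicity of $f$ itself, and the bound $2\|u\|_\infty$ on orbit separation is not a priori below the expansivity constant, so the shadowing lemma (or an iteration via sufficiently high powers of $f$) must be invoked with care. An alternative I would also consider is to construct an inverse $G=\mathrm{id}+v$ from the nonlinear equation $v\circ A - A\,v = -\phi\circ(\mathrm{id}+v)$ via a Banach fixed point and then verify $H\circ G=\mathrm{id}$ using uniqueness of bounded solutions of $K\circ A=A\circ K$ (writing $K=\mathrm{id}+w$ gives $w(A^k x)=A^k w(x)$, which together with boundedness forces $w\equiv 0$ on letting $k\to\pm\infty$); this route is algebraically cleaner but demands a smallness hypothesis on the Lipschitz constant of $\phi$ that is not built into the statement.
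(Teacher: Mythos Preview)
The paper does not prove this theorem at all: immediately after the statement it writes ``For a short proof see \cite{KH}. Franks proved it in the assumption that the nonwandering set of $A$ is the whole torus (see \cite{F}). Manning did it in full generality (see \cite{M}).'' So there is no in-paper argument to compare your sketch against; you have supplied an actual outline where the paper offers only citations.

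Your outline is precisely the Franks strategy, and the construction of the semiconjugacy is correct as written: the series for $u^s$ and $u^u$ converge, the periodicity check goes through via $F(x+m)=F(x)+Am$, and the degree-one homotopy gives surjectivity of $h$. Two points deserve comment. First, you silently assume that the induced map $A=f_*$ on $H_1(\T^n)$ is hyperbolic; this is true for Anosov $f$ but is itself a nontrivial ingredient of the Franks--Manning package, and without it your geometric series diverge. Second --- and you already flag this honestly --- the passage from semiconjugacy to homeomorphism is exactly where the real work lies. The estimate $\|F^n\tilde x-F^n\tilde y\|\le 2\|u\|_\infty$ is correct, but ordinary expansivity on $\T^n$ does not apply since $2\|u\|_\infty$ need not be below the expansivity constant; what is actually used is the \emph{global product structure} of the lifted stable and unstable foliations on $\R^n$ (each lifted leaf is a Lipschitz graph over the corresponding eigenspace of $A$), which forces any two points whose entire $F$-orbits stay a bounded distance apart in $\R^n$ to coincide. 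Your alternative route, building an inverse $G=\mathrm{id}+v$ by a Banach fixed point, is also standard, but the required contraction has to come from the hyperbolic splitting of $f$ itself rather than from any Lipschitz smallness of $\phi$; historically this is exactly the gap between Franks' result (under $\Omega(f)=\T^n$) and Manning's unconditional theorem.
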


For a short proof see \cite{KH}. Franks proved it in the assumption that the
nonwandering set of $A$ is the whole torus (see \cite{F}). Manning did it in
full generality (see \cite{M}).

\begin{thm}The topological conjugacy of two linear hyperbolic automorphisms
$A$, $B$ of~$\T_{n}$ imply the existence of a linear conjugacy, i.e.

\begin{center}
$\exists C\in GL_{n}(\Z)$, s.t. $AC=CB$.
\end{center}\end{thm}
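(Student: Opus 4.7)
The plan is to extract the linear conjugacy directly from a topological one by lifting to the universal cover $p:\R^{n}\to\T^{n}$ and reading off the induced action on the deck group $\Z^{n}=\pi_{1}(\T^{n})$.

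Fix a homeomorphism $h:\T^{n}\to\T^{n}$ with $h\circ B=A\circ h$ and a continuous lift $\tilde{h}:\R^{n}\to\R^{n}$. For each $v\in\Z^{n}$ the difference $\tilde{h}(x+v)-\tilde{h}(x)$ lies in $\Z^{n}$, depends continuously on $x$, and is therefore constant on the connected space $\R^{n}$; call this value $C(v)$. A short calculation shows that $v\mapsto C(v)$ is a group homomorphism $\Z^{n}\to\Z^{n}$, and the same construction applied to $h^{-1}$ produces its two-sided inverse. Hence $C\in GL_{n}(\Z)$.

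Next I would transfer the conjugacy relation to the cover. Both $\tilde{h}\circ\tilde{B}$ and $\tilde{A}\circ\tilde{h}$ cover $h\circ B=A\circ h$, so they differ by a single deck translation: there exists $w\in\Z^{n}$ with
\begin{equation*}
\tilde{h}\bigl(\tilde{B}x\bigr)=\tilde{A}\bigl(\tilde{h}x\bigr)+w\qquad (x\in\R^{n}).
\end{equation*}
Since $A$ and $B$ are linear, their lifts satisfy $\tilde{A}(x+v)=\tilde{A}x+Av$ and $\tilde{B}(x+v)=\tilde{B}x+Bv$. Evaluating the displayed identity at $x+v$ and subtracting its value at $x$ gives $C(Bv)=A(Cv)$ for every $v\in\Z^{n}$, i.e.\ the matrix equality $CB=AC$ in $M_{n}(\Z)$, so that $C$ furnishes the required linear conjugacy.

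There is essentially no deep obstacle: the only mildly awkward point is the auxiliary translation $w$, which blocks a naive matrix comparison of $\tilde{h}\circ\tilde{B}$ with $\tilde{A}\circ\tilde{h}$ but disappears on taking finite differences in $v$. Note that hyperbolicity of $A$ and $B$ plays no role in this reduction; the statement really says that any self-homeomorphism of $\T^{n}$ induces an integral automorphism of $H_{1}(\T^{n})$ intertwining the homology maps of $A$ and $B$.
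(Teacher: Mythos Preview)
Your proof is correct and is precisely a detailed realization of what the paper only sketches: the paper merely remarks that the result ``can be easily derived from the action of an Anosov diffeomorphism on $H_{1}(\T^{n})$'', and the map $C$ you extract from the deck group is exactly the induced automorphism $h_{*}$ on $\pi_{1}(\T^{n})\cong H_{1}(\T^{n})\cong\Z^{n}$. So your approach is essentially the same as the paper's, just fully written out.
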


Its proof can be easily derived from the action of an Anosov diffeomorphism
on~$H_{1}(\T^{n})$.

Therefore the initial problem reduced to the purely algebraic question.

\begin{prob}\label{p1}Given two linear hyperbolic automorphisms
$A$, $B$ of~$\T_{n}$. Find if $\exists C\in\nobreak GL_{n}(\Z)$ s.t.
$A=CBC^{-1}.$\end{prob}

For the problem of a more broad conjugacy: $A \sim B$  if and only if
$B=CAC^{-1}$ for some $C\in\nobreak GL_{n}(\C)$ a necessary and sufficient
condition is that $A$ and $B$ have the same Jordan Normal Form (JNF). But
the condition $C\in\nobreak GL_{n}(\Z)$ is turned an additional requirement.

This was known already to Gauss. Indeed, in the case~$n=2$ Gauss reduced the
question to the question in the theory of binary quadratic forms. The last
question was solved by him. For description of the reduction see \cite{AKK}.

Also there were some algebraic attempts to solve Problem \ref{p1}.
We mention here just two of them. Grunewald in 1976 (see \cite{G})
introduced explicit algorithms, that decide if~$A$, $B\in
GL_{n}(\Q)$ are conjugate by an element~$C\in\nobreak GL_{n}(\Z)$
or~$C\in\nobreak SL_{n}(\Z)$ and, if such~$C$ exists, find one. But
these algorithms are "highly exponential" (their complexity is very
high) and their geometrical meaning is unclear. Recently Karpenkov
suggested to use Hessenberg matrices \cite{Ka2}. His idea is to
follow the Gauss reduction theory, which finds special reduced matrices
in each conjugacy class. He proved that for a fixed class there
exist finitely many reduced Hessenberg matrices. For details see
\cite{Ka2}.

Our main idea is to use the geometrical interpretation of a complete invariant in the case
$n=2$ and to generalize it to the case $n>2$ using notations of geometric continued fractions
(in the sense of Klein and Klein-Voronoi, all necessary definitions will appear in the
Section 5).

\section{Complete solution in the case $n=2$}

Let $A$ be a hyperbolic automorphism of $\T^{2}$, i.e. $A \in GL_{2}(\Z)$ and
$\lambda$, $\mu \in \R$ are its eigenvalues s.t. $|\mu|<1<|\lambda|$. Let $v=(x,y)^T$
be an expanding eigenvector, i.e. $Av=\lambda v$, with slope $x/y=\omega_A$
being a quadratic irrationality.

\begin{thm}[\bf Lagrange] A decomposition of a real number $\omega$ into a continued
fraction is periodical from some place if and only if $\omega$ is a quadratic
irrationality.\end{thm}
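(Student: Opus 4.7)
The plan is to treat the two implications separately. For the ``if'' direction, suppose $\omega = [a_{0}; a_{1}, \ldots, a_{k}, \overline{b_{1}, \ldots, b_{m}}]$ is eventually periodic. I would isolate the purely periodic tail $\alpha = [\overline{b_{1}; b_{2}, \ldots, b_{m}}]$; unwinding one full period expresses $\alpha$ as a M\"obius transform of itself, $\alpha = (p\alpha + p')/(q\alpha + q')$, with integer entries coming from the convergent matrix of $(b_{1}, \ldots, b_{m})$. Clearing denominators yields a quadratic with integer coefficients satisfied by $\alpha$, and irrationality of any infinite continued fraction makes $\alpha$ a genuine quadratic irrationality. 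Since $\omega$ is obtained from $\alpha$ by finitely many operations $x \mapsto a + 1/x$, it lies in $\Q(\alpha)$ and is itself quadratic irrational.

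For the ``only if'' direction, let $\omega$ satisfy $a\omega^{2} + b\omega + c = 0$ with $a, b, c \in \Z$, $a \neq 0$, and discriminant $D = b^{2} - 4ac$ positive and non-square. Let $\omega_{n}$ denote the $n$-th complete quotient and $p_{n}/q_{n}$ the $n$-th convergent, so that $\omega = (p_{n-1}\omega_{n} + p_{n-2})/(q_{n-1}\omega_{n} + q_{n-2})$. Substituting and clearing denominators produces
\[
A_{n}\omega_{n}^{2} + B_{n}\omega_{n} + C_{n} = 0,
\]
with $A_{n}, B_{n}, C_{n} \in \Z$ built from $a, b, c$ and the convergents. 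Routine bookkeeping shows $C_{n} = A_{n-1}$ and $B_{n}^{2} - 4A_{n}C_{n} = D$: the discriminant is preserved under the M\"obius action on complete quotients.

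The crux of the argument is a uniform bound on $|A_{n}|$. Writing $A_{n} = a p_{n-1}^{2} + b p_{n-1}q_{n-1} + c q_{n-1}^{2}$ and using the classical approximation estimate $|\omega - p_{n-1}/q_{n-1}| < 1/q_{n-1}^{2}$ together with the defining equation $a\omega^{2} + b\omega + c = 0$, the quantity $A_{n}$ reduces to a linear expression in $\varepsilon_{n} := p_{n-1} - q_{n-1}\omega$, giving $|A_{n}| \le 2|a\omega| + |a| + |b|$ uniformly in $n$. Combined with $|C_{n}| = |A_{n-1}|$ and $B_{n}^{2} = D + 4 A_{n} C_{n}$, the triples $(A_{n}, B_{n}, C_{n})$ take only finitely many integer values, so some complete quotient must repeat: $\omega_{n} = \omega_{m}$ for some $n < m$, and the continued fraction is periodic from position $n$ onward. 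The principal obstacle is exactly this boundedness step; everything else is M\"obius-matrix algebra.
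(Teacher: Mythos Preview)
Your argument is correct and is essentially the classical proof one finds in Khinchin's monograph (cited in the paper as \cite{Kh}): the easy direction via the fixed-point equation for the purely periodic tail, and the hard direction via the invariance of the discriminant together with the bound $|A_{n}|\le 2|a\omega|+|a|+|b|$ forcing only finitely many complete quotients.

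There is, however, nothing to compare against in the paper itself. The paper does not prove this theorem; it states it as Lagrange's classical result and immediately uses it to assert that the slope $\omega_{A}$ of an eigenvector of a hyperbolic $A\in GL_{2}(\Z)$ has an eventually periodic expansion. In other words, in this paper the theorem functions as background, not as something to be established. Your write-up is an appropriate substitute for a textbook reference, but be aware that you are supplying a proof where the author deliberately chose to quote rather than prove.
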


According to this classical result the continued fraction expansion of $\omega_A$ is
periodic:
\begin{multline}\label{Decomp}
\omega_A = [a_0;a_1,a_2,\ldots,a_k ,\underline{a_{k + 1},\ldots,a_{k + q}},
\underline{a_{k + q + 1},\ldots,a_{k + 2q}},\ldots]={}\\
{}=[a_0;a_1,a_2,\ldots,a_k,(a_{k + 1},\ldots,a_{k + q})],
\end{multline}
where $a_{k + iq + j} = a_{k + j}$ for $i \geq 0, \ j = 1,\ldots,q$.

By "the period" of this continued fraction we shall mean not only $q$, but also
the finite sequence of numbers $(a_{k + 1},\ldots,a_{k + q})$ up to a cyclic
permutation.

In \cite{AKK} authors using purely analytic means proved the following statement.

\begin{thm}\label{key} Let $A$ and $B$ be two hyperbolic automorphisms of $\T^{2}$
with the same JNF, then $A$ is conjugate to $B$ via some $C \in GL_{2}(\Z)$ if and only if
continued fraction expansions of $\omega_A$ and $\omega_B$ have the same period
(i.e. the same periodic part).\end{thm}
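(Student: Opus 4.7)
The plan is to reduce the question to a classical result in the analytic theory of continued fractions --- Serret's theorem --- which identifies the orbits of the M\"obius action of $GL_2(\Z)$ on $\R\setminus\Q$: two irrationals $\alpha$, $\beta$ lie in a common orbit if and only if their regular continued fraction expansions have the same tail. For periodic expansions, this last condition is equivalent precisely to the periods coinciding as cyclic sequences, in the sense introduced above. Observe that $A\in GL_2(\Z)$ acts on the projective line as a M\"obius transformation, and the slope $\omega_A$ of its expanding eigenvector is a fixed point of this action.

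For the ``only if'' direction I would note that a conjugacy $A=CBC^{-1}$ with $C\in GL_2(\Z)$ sends the $\lambda$-eigenline of $B$ to the $\lambda$-eigenline of $A$. Translated to the projective line, $C$ maps $\omega_B$ to $\omega_A$, placing them in a single $GL_2(\Z)$-orbit; Serret's theorem then yields that the periods of $\omega_A$ and $\omega_B$ coincide. For the ``if'' direction, Serret's theorem in reverse supplies a $C\in GL_2(\Z)$ with $C(\omega_A)=\omega_B$. Then $A':=CAC^{-1}$ lies in $GL_2(\Z)$, has the same characteristic polynomial as $B$ (since $A$ and $B$ share their JNF by hypothesis), and has the line through $(\omega_B,1)^T$ as its $\lambda$-eigenline, just like $B$.

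It remains to show $A'=B$. The key observation is that for any integer matrix with a quadratic-irrational eigenvector slope, applying Galois conjugation to the eigenvector equation (which leaves the matrix itself invariant) produces a second eigenvector with the Galois-conjugate eigenvalue; hence the $\mu$-eigenline has slope $\bar\omega_B$ for both $A'$ and $B$. Having the same pair of eigenvalues acting on the same two eigenlines, the matrices must coincide. The main obstacle I anticipate is ensuring that the $C$ furnished by Serret's theorem correctly pairs the \emph{expanding} eigenvector of $A$ with the \emph{expanding} eigenvector of $B$, rather than with the contracting one; this is in fact built into the setup, because we chose $C$ to send $\omega_A$ to $\omega_B$, and the contracting sides then align automatically by the Galois-conjugation argument.
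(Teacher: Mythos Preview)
Your argument via Serret's theorem is correct: the conjugacy carries eigenlines to eigenlines (giving the ``only if'' direction), and conversely a M\"obius $C$ matching $\omega_A$ with $\omega_B$ forces $CAC^{-1}$ and $B$ to share both eigenlines (the second one by Galois conjugation, since $\lambda=c\omega_B+d\in\Q(\omega_B)$ and the nontrivial automorphism fixes the integer matrix) and both eigenvalues (by the same-JNF hypothesis), hence to coincide.

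There is nothing to compare against in this paper, however: the theorem is not proved here but is quoted from \cite{AKK}, with only the remark that the argument there proceeds ``using purely analytic means.'' Your reduction to Serret's theorem is exactly such an analytic route and is the standard one; it is in all likelihood the argument of \cite{AKK} or a close variant. The paper's own contribution is rather the geometric reinterpretation via sails and LLS-sequences discussed in the surrounding sections.
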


The statement of the Theorem \ref{key} has a very clear and beautiful geometric
interpretation in terms of geometric continued fractions.

\section{Notations of geometric continued fractions}

In this Section we follow Karpenkov's notations (see \cite{Ka1},
\cite{Ka2} and \cite{Ka3}).

A point of $\R^{n+1}$ is said to
be {\it integer} if all its coordinates are integers. Two sets are called {\it
integer-affine (integer-linearly) equivalent} if there exists an affine (linear)
transformation of $\R^{n+1}$ preserving the lattice of all integer points, and
transforming the first set to the second. A plane is called {\it integer} if it is
integer-affine equivalent to some plane passing through the origin and containing
the sublattice of the integer lattice, and the rank of the sublattice is equal to
the dimension of the plane. A polyhedron (in particular, a triangle or a planar
convex polygon) is said to be {\it integer} if all its vertices are integers.

A segment $PQ$ is said to be {\it integer} if its endpoints ($P$ and $Q$) are
integer. An {\it integer length} of an integer segment $PQ$ is the number of
integer points contained in the interior of the segment plus one. We denote the
integer length by $Il(PQ)$.

Let $P$, $Q$ and $R$ be three integer points that do not lie in the
same straight line. We denote the angle with the vertex at $Q$ and
the rays $QP$ and $QR$ by $\angle PQR$. Also we denote the Euclidean
area of a triangle $PQR$ by $S_{PQR}$.

\begin{defi}Consider an arbitrary integer triangle $PQR$. Then the {\it integer
sine}, $I\sin(PQR)$ of the angle $PQR$ is defined by the following
formula:
$$I\sin(PQR)=\frac{2S_{PQR}}{Il(PQ)Il(QR)}.$$
\end{defi}

It is easy to check that the integer sine is a positive integer-valued function.

Consider arbitrary $n+1$ hyperplanes in $\R^{n+1}$ that intersect at a unique
point, namely the origin. The complement to the union of these hyperplanes
consists of $2^{n+1}$ open orthants. Let us choose an arbitrary orthant.

\begin{defi}The boundary of the convex hull of all integer points except the
origin in the closure of the orthant is called the {\it sail} of the orthant. The
set of all $2^{n+1}$ sails is called the {\it $n$-dimensional continued fraction}
corresponding to the given $n+1$ hyperplanes.\end{defi}

Two $n$-dimensional continued fractions are said to be {\it equivalent} if the
union of all sails of the first continued fraction is integer-linear equivalent to
the union of all sails of the second continued fraction.

\begin{defi}An operator in the group~$SL_{n+1}(\Z)$ is called an {\it integer
irreducible hyperbolic} if the following conditions holds:
\begin{enumerate}
\item {the characteristic polynomial of this operator is irreducible over $\Q$;}
\item {all its eigenvalues are distinct and real.}
\end{enumerate}\end{defi}

Consider some integer irreducible hyperbolic operator~$A\in SL_{n+1}(\Z)$. Let us
take the $n$-dimensional spaces that span all subsets of $n$ linearly independent
eigenvectors of the operator $A$. The spans of every $n$ eigenvectors uniquely
define $n+1$ hyperplanes passing through the origin in general position. These
hyperplanes uniquely define the {\it $n$-dimensional (multidimensional) continued
fraction associated with $A$}.

Let $A$ be an integer irreducible hyperbolic operator. Denote by $\Xi(A)$ the set
of all integer operators commuting with $A$. These operators form a ring with a
standard matrix addition and multiplication. Consider the subset of the set
$SL_{n+1}(\Z)\cap\Xi(A)$ that consists of all operators with positive real
eigenvalues and denote it by $\bar\Xi(A)$. From the Dirichlet unit element
theorem (see \cite{BS}) it follows that the subset $\bar\Xi(A)$ forms a
multiplicative Abelian group isomorphic to $\Z^n$, and that its action is free.
Any operator of this group preserves the integer lattice and the union of all
$n+1$ hyperplanes, and hence it takes the $n$-dimensional continued fraction onto
itself bijectively. (Whenever all eigenvalues are positive, the sails are also
taken onto themselves in a one-to-one way.)

The quotient of a sail under this group action is isomorphic to an $n$-dimensional
torus. By a {\it fundamental domain} we mean the union of some faces that contains
exactly one face from each equivalence class (with respect to the action of the
group $\bar\Xi(A)$).

Previous part of this Section was Klein's version of geometric continued
fractions. Now we turn to a Klein-Voronoi's version.

Consider any real operator $A$ of $SL_n(\R)$ whose eigenvalues are all
distinct. Suppose, that it has $k$ real eigenvalues $r_1,\ldots, r_k$
and $2l$ complex conjugate roots $c_1,\bar c_1,\ldots, c_l, \bar
c_l$, here $k+2l=n$.

Denote by $T^l(A)$ the set of all real operators commuting with
$A$ such that their real eigenvalues are all unit and the absolute
values for all complex eigenvalues equal one. Actually, $T^l(A)$
is an Abelian group with operation of matrix multiplication.

For a vector $v$ in $\R^n$ we denote by $T_A(v)$ the orbit of $v$
with respect of the action of the group of operators $T^l(A)$. If
$v$ is in general position with respect to the operator $A$ (i.e.
it does not lie in invariant planes of $A$), then $T_A(v)$ is
homeomorphic to the $l$-dimensional torus. For a vector of an
invariant plane of $A$ the orbit $T_A(v)$ is also homeomorphic to
a torus of positive dimension not greater than $l$, or to a point.

For instance, if $v$ is a real eigenvector, then $T_A(v)=\{v\}$.
The second example: if $v$ is in a real hyperplane spanned by two
complex conjugate eigenvectors, then $T_A(v)$ is an ellipse.

Let $g_i$ be a real eigenvector with eigenvalue $r_i$
for $i=1,\ldots, k$; $g_{k+2j-1}$ and $g_{k+2j}$ be vectors
corresponding to the real and imaginary parts of some complex
eigenvector with eigenvalue $c_j$ for $j=1,\ldots, l$. We
consider the coordinate system corresponding to the basis
$\{g_i\}$:
$$
OX_1X_2\ldots X_{k}Y_{1}Z_{1}Y_2Z_2\ldots Y_l Z_l.
$$

Denote by $\pi$ the $(k{+}l)$-dimensional plane $OX_1X_2\ldots X_k
Y_1Y_2\ldots Y_l$. Let $\pi_+$ be the cone in the plane $\pi$
defined by the equations $y_i\ge 0$ for $i=1,\ldots, l$. For any
$v$ the orbit $T_A(v)$ intersects the cone $\pi_+$ in a unique
point.

\begin{defi}
A point $p$ in the cone $\pi_+$ is said to be {\it $\pi$-integer}
if the orbit $T_A(p)$ contains at least one integer point.
\end{defi}

Consider all (real) hyperplanes invariant under the action of the
operator~$A$. There are exactly $k$ such hyperplanes. In the
above coordinates the $i$-th of them is defined by the equation
$x_i=0$.

The complement to the union of all invariant hyperplanes in the
cone $\pi_+$ consists of $2^k$ connected components.
Consider one of them.
\begin{defi}
The convex hull of all $\pi$-integer points except the origin
contained in the given connected component is called a
{\it factor-sail} of the operator~$A$. The set of all
factor-sails is said to be the {\it factor-continued fraction}
for the operator~$A$.
\\
The union of all orbits~$T_A(*)$ in~$\R^n$ represented by the
points in the factor-sail is called the {\it sail} of the
operator~$A$. The set of all sails is said to be the  {\it
continued fraction} for the operator~$A$ (in the sense of
Klein-Voronoi).
\end{defi}

The intersection of the factor-sail with a hyperplane in $\pi$ is
said to be an $m$-dimensional face of the factor-sail if it is
homeomorphic to the $m$-dimensional disc.

The union of all orbits in $\R^n$ represented by points in some
face of the factor-sail is called the {\it orbit-face} of the
operator $A$.

Integer points of the sail are said to be {\it vertices} of this
sail.

\begin{defi}Continued fractions for operators $A$ and $B$ (in the
sense of Klein-Voronoi) are said to be {\it equivalent} if the
union of all sails of the first continued fraction is
integer-linear equivalent to the union of all sails of the second
continued fraction\end{defi}

Consider now an operator $A$ in the group $SL_n(\Z)$ with
irreducible characteristic polynomial. Suppose, that it has $k$
real roots $r_1,\ldots, r_k$ and $2l$ complex conjugate roots:
$c_1,\bar c_1,\ldots, c_l, \bar c_l$, where $k+2l=n$. In the
simplest possible cases $k{+}l=1$ any factor-sail of $A$ is a
point. If $k{+}l>1$, than any factor-sail of $A$ is an infinite
polyhedral surface homeomorphic to $\R^{k+l-1}$.

\begin{defi}
The group of all $SL_n(\Z)$-operators commuting with $A$  with
positive eigenvectors is called {\it the Dirichlet group} and
denoted by $\hat\Xi(A)$.
\end{defi}

The Dirichlet group $\hat\Xi(A)$ takes
any sail of $A$ to itself. By Dirichlet unit theorem the group
$\hat\Xi(A)$ is homomorphic to $\Z^{k+l-1}$ and its action on any
sail is free. The quotient of a sail by the action of $\hat\Xi(A)$ is
homeomorphic to the $(n{-}1)$-dimensional torus. By a {\it
fundamental domain} of the sail we mean a collection of open
orbit-faces such that for any $\hat\Xi(A)$-orbit of orbit-faces of
the sail there exists a unique representative in the collection.

\section{Invariants in the case $n>2$}

The geometric interpretation the Theorem \ref{key} is the following (see
\cite{Ka4} for details). A sail for a linear hyperbolic automorphism $A$ of
$\T^2$ is an infinite polygonal chain $\ldots,V_{-2},V_{-1},V_0,V_1,V_2,\ldots$,
where $V_i$ are integer vertices of the sail.

\begin{defi}The infinite sequence of positive integers
$$
(\ldots,Il(V_{-2}V_{-1}),L\sin(\angle
V_{-2}V_{-1}V_0),Il(V_{-1}V_{0}), L\sin(\angle
V_{-1}V_{0}V_{1}),Il(V_{0}V_{1}),\ldots)
$$
is called the {\it LLS-sequence} of the sail.\end{defi}

Let $\omega_A=[a_0;a_1,a_2,\ldots,a_k,(a_{k + 1},\ldots,a_{k + q})]$ be the same as
in~$(\ref{Decomp})$. Then the LLS-sequence of a sail of $A$ is periodic and its
period is equal to $[(a_{k + 1},\ldots,a_{k + q})]$ up to a cyclic permutation
(surely, LLS-sequences of all four sails of $A$ coincide). For the proof see \cite{Ka4}.

Let us generalize the last statement (it would be a multidimensional analogue of the
Theorem \ref{key}) "automatically" by using notations of Klein's geometric continued
fractions. Unfortunately, its definition restricts us from a linear hyperbolic
automorphism to an integer irreducible hyperbolic operators.

Consider two integer irreducible hyperbolic operators~$A,B\in SL_{n}(\Z)$
(automorphisms of $\T^{n}$).

\begin{thm}\label{t0}
Automorphisms $A$ and $B$ are linearly conjugate if and
only if they have the same JNF and their
$(n-1)$\nobreakdash-di\-me\-n\-sion\-al continued fractions are equivalent.\end{thm}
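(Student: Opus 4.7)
For necessity, suppose $C\in GL_n(\Z)$ with $A=CBC^{-1}$. Then $A$ and $B$ are conjugate over $\R$ and hence share a JNF. Because $C$ preserves the integer lattice and carries eigenvectors of $B$ to eigenvectors of $A$, it maps the $n$ invariant hyperplanes of $B$ to those of $A$, permutes the $2^n$ orthants, and therefore sends the union of sails of $B$ onto the union of sails of $A$. Thus $C$ itself realizes the required integer-linear equivalence of the continued fractions.

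For sufficiency, assume the JNFs coincide and let $C_0\in GL_n(\Z)$ realize the integer-linear equivalence of the continued fractions. The first step is to observe that $C_0$ automatically maps the $n$ invariant hyperplanes of $B$ to those of $A$: each invariant hyperplane is canonically recoverable from the continued fraction (for instance, as the asymptotic support of unbounded faces of sails, or as the hyperplane separating two adjacent orthants whose sails are asymptotic along it), and any integer-linear equivalence respects such canonical data. Setting $\tilde A:=C_0BC_0^{-1}$, one obtains an integer irreducible hyperbolic operator with the same eigenlines as $A$ and the same JNF. Since the characteristic polynomial $p(t)$ is irreducible, $A$ and $\tilde A$ are simultaneously diagonal in a common eigenbasis, and their diagonals differ only by a permutation $\sigma$ of the eigenvalues.

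The crux of the argument, and the main obstacle I foresee, is to promote this geometric coincidence into a genuine $GL_n(\Z)$-conjugacy between $A$ and $\tilde A$. Working inside the order $\mathcal{O}:=\Z[t]/(p(t))$, both $A$ and $\tilde A$ endow $\Z^n$ with an $\mathcal{O}$-module structure, and these structures differ by the Galois twist $\sigma$; a matrix $C_1\in GL_n(\Z)$ with $C_1\tilde A=AC_1$ is precisely an $\mathcal{O}$-module isomorphism between them. The Dirichlet group $\bar\Xi(A)$ acts on the sail of $A$, and $C_0$ conjugates it to $\bar\Xi(B)$ acting on the sail of $B$; by the Dirichlet unit theorem these are essentially the positive-unit groups of the corresponding orders, and their simultaneous transport by $C_0$ is what should force the two $\mathcal{O}$-module structures on $\Z^n$ to be isomorphic, producing the desired $C_1$. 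The delicate point is extracting this module-theoretic statement from the purely combinatorial equivalence of sails; the two-dimensional precedent is encouraging, since there the period of the LLS-sequence already recovers a fundamental domain of $\bar\Xi(A)$, and hence its generator up to inverse, which is precisely what pins down the Galois twist.
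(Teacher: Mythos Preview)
Your necessity direction is essentially the paper's: the conjugating matrix $C\in GL_n(\Z)$ sends eigen-hyperplanes to eigen-hyperplanes, orthants to orthants, and therefore sails to sails, which is exactly the integer-linear equivalence required.

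For sufficiency you diverge sharply from the paper. The paper's entire converse is two sentences: the definition of equivalence hands you an integer-linear map $C\in GL_n(\Z)$ taking the union of sails of $B$ to that of $A$, and the paper simply declares that this $C$ ``gives us the sought-for conjugacy.'' There is no $\tilde A=C_0BC_0^{-1}$, no Galois twist $\sigma$, no order $\mathcal O=\Z[t]/(p(t))$, and no Dirichlet-unit argument.

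You have put your finger on a genuine subtlety the paper glosses over: an equivalence of sails only gives you that $C_0BC_0^{-1}$ has the \emph{same continued fraction} as $A$, and that alone does not force $C_0BC_0^{-1}=A$ (many integer operators in $\Xi(A)$ share $A$'s sails). But having spotted the issue, you do not resolve it. Your last paragraph explicitly leaves open ``extracting this module-theoretic statement from the purely combinatorial equivalence of sails,'' and the sketch preceding it (transport of Dirichlet groups forcing an $\mathcal O$-module isomorphism) is a hope rather than an argument. So as a proof your proposal is incomplete precisely at the point you flag as delicate, while the machinery you introduce is far heavier than anything the paper invokes. Relative to the paper, then, your converse is both more elaborate and still unfinished; whether the paper's one-line converse is itself fully justified is a fair question, but that is the proof you are being compared against.
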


\begin{rem} For $n=2$ the equivalence of two continued fractions means the same
LLS-sequence in geometric terms or the coincidence of their periods (up to a cyclic
permutation) in the analytic language.\end{rem}

\begin{proof}[Proof of the Theorem \ref{t0}]\label{t1}
If $A$ and $B$ are linearly conjugate by some operator
$C\in GL_{n}(\Z)$, then
$C$ realizes the integer-linear equivalence of $(n-1)$-dimensional continued fractions
associated with $A$ and $B$ just by the definition: integer points goes to integer points,
the convex hull in every orthant goes to the convex hull in the new orthant, which is
the image of the old one, and sails goes to sails.

Inversely, $(n-1)$-dimensional continued fractions associated with $A$ and $B$ are
equivalent, then by the definition there exists a linear transformation of $\R^{n}$
preserving the set of all integer points $C$. Naturally, in this case $C$ belongs
to $GL_{n}(\Z)$ and $C$ gives us the sought-for conjugacy.\end{proof}

\begin{rem} In particular all combinatorial data (integer lengths, integer sines,
integer volumes of all dimensions (straightforward generalizations of definitions
of the integer length and the integer sine) and degrees of all vertices in a
fundamental domain) are invariant under an action of a conjugacy.\end{rem}

Of course, assumptions of the Theorem \ref{t0} is sufficiently strong. For
example, the case of nonreal eigenvalues of a hyperbolic automorphism of $\T^{n}$
is not considered. By this reason the Klein-Voronoi's generalization of continued
fractions seems to be more interesting and attractive, than the Klein's
generalization. From the other hand, the Klein-Voronoi's generalization looks
less natural than the Klein's generalization.

Consider two hyperbolic operators~$A,B\in SL_{n}(\Z)$ with irreducible
characteristic polynomials.

\begin{thm}\label{t2}
Operators $A$ and $B$ are linearly conjugate if and only if
they have the same JNF and their
continued fractions (in the sense of Klein-Voronoi) are equivalent.
\end{thm}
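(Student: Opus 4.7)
The plan is to follow the two-way structural argument used for Theorem~\ref{t0}, with additional bookkeeping for the torus group $T^l$ entering the Klein--Voronoi construction. For the forward implication, assume $B = CAC^{-1}$ with $C\in GL_n(\Z)$. I would check that conjugation by $C$ transports every ingredient of the Klein--Voronoi sail of $A$ to the corresponding ingredient for $B$: $C$ maps real eigenvectors and real/imaginary parts of complex eigenvectors of $A$ to those of $B$, hence maps the coordinate system $\{g_i\}$, the plane $\pi$, the cone $\pi_+$, and the $k$ invariant hyperplanes of $A$ to the corresponding objects for $B$. The defining spectral conditions for $T^l(A)$ are conjugation-invariant, so $C\,T^l(A)\,C^{-1} = T^l(B)$, which forces $T_A(v)$-orbits to go to $T_B(Cv)$-orbits. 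Because $C \in GL_n(\Z)$, integer points go to integer points and hence $\pi$-integer points go to $\pi$-integer points; convex hulls are transported to convex hulls in the image connected components of $\pi_+$ minus the invariant hyperplanes. This gives the required integer-linear equivalence of continued fractions.

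For the converse, assume $A$ and $B$ share a JNF and some $C \in GL_n(\Z)$ carries the union of all sails of $A$ onto the union of all sails of $B$. The invariant hyperplanes of $A$ are recoverable from the sails as their asymptotic codimension-one boundaries, so $C$ maps them to the invariant hyperplanes of $B$. Moreover, each sail of $A$ is stratified into orbit-faces, which are precisely the $T^l(A)$-saturated faces; this torus foliation is therefore an intrinsic datum of the sail, and $C$ sends $T^l(A)$-orbits to $T^l(B)$-orbits, in particular mapping each $A$-invariant real $2$-plane spanned by the real and imaginary parts of a complex eigenvector to the corresponding subspace for $B$. The equal-JNF hypothesis yields some $D \in GL_n(\R)$ with $D A D^{-1} = B$ sending the full $A$-eigenstructure to that of $B$. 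Put $E := D^{-1} C$. By the above, $E$ preserves every $A$-invariant $1$- or $2$-dimensional subspace together with the $T^l(A)$-orbit foliation on it; it therefore acts as a scalar on each real eigenline and as a rotation-scaling on each complex $2$-plane, in either case lying in the centraliser of the corresponding restriction of $A$. Hence $E$ centralises $A$ and
\begin{equation*}
CAC^{-1} \;=\; D E A E^{-1} D^{-1} \;=\; D A D^{-1} \;=\; B .
\end{equation*}

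The main obstacle I anticipate is the step that upgrades the set-theoretic datum ``$C$ sends sails of $A$ to sails of $B$'' to the dynamical datum ``$C$ sends $T^l(A)$-orbits to $T^l(B)$-orbits,'' together with the related orientation issue needed to exclude reflection-scalings on the complex invariant $2$-planes. This rests on the fact that orbit-faces are $T^l$-saturated and that the resulting stratification is detectable from the topology of the sail, combined with the observation that linear self-maps of a real $2$-plane preserving every concentric circle about the origin are precisely the rotation- and reflection-scalings, the latter being ruled out by preservation of the orbit orientation inherited from the $T^l$-action (and, if needed, by an appropriate renormalisation of $D$ within the centraliser of $A$). Once these intrinsicness statements are secured, the centraliser argument closes the converse exactly as in the proof of Theorem~\ref{t0}.
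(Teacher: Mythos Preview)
Your forward direction is the paper's argument with the $T^l$-bookkeeping made explicit; nothing to add there.

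For the converse, the paper does \emph{not} run a centraliser argument. It repeats verbatim the one-line reasoning of Theorem~\ref{t0}: the definition of equivalence hands you a $C\in GL_n(\Z)$ taking the union of sails of $A$ onto that of $B$, and the paper simply declares that this $C$ ``gives us the sought-for conjugacy.'' Your route is substantially more elaborate: you introduce an auxiliary $D$ with $DAD^{-1}=B$ and try to show that $E=D^{-1}C$ centralises $A$. This is a genuinely different (and more honest) attempt to justify the step the paper treats as obvious; it buys you an explicit mechanism for why equality of eigenspace arrangements should upgrade to equality of operators.

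There is, however, a gap in your argument that you did not flag, and it sits logically before the $T^l$-intrinsicness obstacle you do identify. From ``$C$ sends $A$-invariant hyperplanes to $B$-invariant hyperplanes'' you only obtain that $C$ maps the \emph{set} of real $A$-eigenlines to the \emph{set} of real $B$-eigenlines, via some permutation $\sigma$; likewise for the complex $2$-planes. Your $D$, by contrast, matches eigenspaces according to eigenvalue. Hence $E=D^{-1}C$ need not preserve each individual $A$-eigenline: it may permute them, and a nontrivial permutation cannot centralise $A$ since the eigenvalues are distinct. Redefining $D$ to follow $C$'s permutation does not help, because then $DAD^{-1}=B$ forces $r_i=r_{\sigma(i)}$, which is what you were trying to prove. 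To close this you need an argument that the sail itself records which eigenvalue sits on which asymptotic direction --- for instance via growth rates of the Dirichlet group $\hat\Xi(A)$ acting on fundamental-domain faces --- but neither you nor the paper supplies one.
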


\begin{proof}If $A$ and $B$ are linearly conjugate by some operator
$C\in~GL_{n}(\Z)$, then $C$ realizes an integer-linear equivalence of their
continued fractions (in the sense of Klein-Voronoi) just by the definition. Here we
need to check, that the action of a conjugacy well coordinates with the sequence
of definitions of Klein-Voronoi's continued fractions only.

Inversely, continued fractions (in the sense of Klein-Voronoi) associated with $A$
and $B$ are equivalent, then by the definition there exists a linear transformation of
$\R^{n}$ preserving the set of all integer points $C$. Naturally, in this case
$C$ belongs to $GL_{n}(\Z)$ and $C$ gives us the sought-for conjugacy.\end{proof}

\section{Algorithms in the case $n>2$}

We mentioned algorithms of Grunewald \cite{G} in the Section 3, but they have
big complexity. Karpenkov in his Ph.~D.~thesis introduced some "deductive"
algorithms, that constructs a fundamental domain for a given geometric
continued fraction in the sense of Klein. The "deductivity" means here that
at some moment a human should "help" to the algorithm by
looking at an interim result and answer if it is sufficient or not. An
application of this "deductive" algorithms for our problems is straightforward.
Namely, starting from two arbitrary diffeomorphisms $f$, $g$ of $T^{n}$,
one can get two linear Anosov maps $A$ and $B$, which are topologically
conjugate to $f$ and $g$ respectively. If $A$ and $B$ have different JNF than
$f$ and $g$ are not conjugate. Otherwise, we need to compare fundamental domains of $A$
and $B$ for an existence of a conjugacy. It is easy to do it if we had already
constructed these fundamental domains --- there exists an effective algorithm
\cite{Ka1}.

Due to results mentioned above we can formulate a following question.

\begin{prob}Invent an effective algorithm deciding if two hyperbolic
toral automorphisms are linearly conjugate or not and finding a conjugating
matrix if the answer is positive.\end{prob}

\section{Lagrange's theorem and its generalizations}

Here we will discuss a classical Lagrange's theorem about continued
fractions and attempts to generalize it.

In 1994 Korkina \cite{Ko} announced a very natural-looking
generalization of this theorem, but her statement appeared to be
wrong. German is claiming to have constructed a counterexample,
however it was never published. In 2008, German and Lakshtanov
\cite{GL} presented a correct result, which they considered as a
generalization of the Lagrange theorem. The weak points of that
generalization is its unreasonable complexity and the fact that its
geometrical meaning is unclear. So the following question appears
quite naturally.

\begin{prob}
Formulate and prove a simple, natural and correct version of
Lagrange's theorem for geometric continued fractions in the sense of
Klein and Klein-Voronoi.
\end{prob}

\section{Additional invariants in the case of $SL_{n}(\Z)$-clas\-sification}

In the case $n=2$ there is a following difference between classification of
hyperbolic toral automorphisms up to conjugacy by $GL_{2}(\Z)$ or $SL_{2}(\Z)$.

Let $A$ and $B$ be linear hyperbolic automorphisms of $\T^{2}$, which are linear
conjugate by some~$C\in~GL_{2}(\Z)$.

Let $\omega_A=[a_0;a_1,a_2,\ldots,a_k~,(a_{k + 1},\ldots,a_{k + q})]$ be the
same as in~$(\ref{Decomp})$.

In \cite{AKK} authors proved the following statement.

\begin{thm}\begin{enumerate}
\item if $q \not\vdots 2$ then $\exists \tilde C \in SL_{2}(\Z)$: $A\tilde C = \tilde CB$.
\item if $q \vdots 2$, $C \in~SL_{2}(\Z)$ then this $C$ gives a sought-for conjugacy.
\item if $q \vdots 2$, $C \not\in~SL_{2}(\Z)$
then~$\not\exists \tilde C \in~SL_{2}(\Z)$: $A\tilde C = \tilde CB$.
\end{enumerate}\end{thm}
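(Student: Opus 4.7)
The plan is to reduce all three cases to a single question about the centralizer of $B$ in $GL_{2}(\Z)$. Any element of $GL_{2}(\Z)$ has determinant $\pm 1$, so case (2) will be immediate: if $\det C=+1$ already, then $\tilde C:=C$ is the required conjugating matrix. The real content of the theorem concerns the situation $\det C=-1$, and the natural question becomes whether we can ``fix'' the conjugacy by right-multiplying $C$ by some matrix $D$ in the centralizer $Z(B):=Z_{GL_{2}(\Z)}(B)$ with $\det D=-1$. Any such $\tilde C:=CD$ still satisfies $A\tilde C=\tilde C B$ because $D$ commutes with $B$, and now has determinant $+1$.

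Next I would describe $Z(B)$ explicitly. Because $B$ is hyperbolic with irreducible characteristic polynomial over $\Q$, its centralizer in $M_{2}(\R)$ is the two-dimensional commutative algebra $\R[B]$, and the centralizer inside $GL_{2}(\Z)$ can be identified with the unit group of the quadratic order $\Z[B]\subset\Q(\lambda)$. By Dirichlet's unit theorem this group is $\{\pm1\}\times\langle\varepsilon\rangle$ for some fundamental unit $\varepsilon$, and under the matrix--algebra identification the determinant of a unit equals its field norm. Hence $Z(B)$ contains an element of determinant $-1$ if and only if $N(\varepsilon)=-1$.

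The crucial remaining step is the classical parity identity $N(\varepsilon)=(-1)^{q}$, where $q$ is the period of the continued fraction for $\omega_{B}$. I would prove it by identifying $\varepsilon$ with the period-shift generator of $\hat\Xi(B)$ acting on the sail: one full period of the LLS-sequence carries $V_{i}$ to $V_{i+q}$, and the corresponding unimodular transformation, written via the convergents, is a product of $q$ elementary matrices $\bigl(\begin{smallmatrix}a_{i}&1\\1&0\end{smallmatrix}\bigr)$ each of determinant $-1$, so the total determinant is $(-1)^{q}$. This step is where the main work sits, and the rest of the argument is bookkeeping; alternatively one can cite the standard result linking Pell's equation with the parity of the period of $\sqrt{d}$.

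Finally I would assemble the three cases. For (1), $q$ odd forces $N(\varepsilon)=-1$, so a unit $D$ of determinant $-1$ exists and $\tilde C:=C$ or $\tilde C:=CD$ (according as $\det C=+1$ or $-1$) yields the desired element of $SL_{2}(\Z)$. Case (2) is the trivial observation from the first paragraph. For (3), $q$ even forces every element of $Z(B)$ to have determinant $+1$; but any $\tilde C$ conjugating $A$ to $B$ must equal $CD$ for some $D\in Z(B)$ (since $C^{-1}\tilde C$ commutes with $B$), whence $\det\tilde C=\det C=-1$, ruling out $\tilde C\in SL_{2}(\Z)$. The main obstacle is the identification $N(\varepsilon)=(-1)^{q}$, which is the only place where the period of the continued fraction actually enters the argument.
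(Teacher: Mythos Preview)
The paper does not give its own proof of this theorem; it simply attributes the result to \cite{AKK} and moves on. So there is no in-paper argument to compare against. Your plan is the standard algebraic route and is correct in outline: the set of all $GL_2(\Z)$-conjugacies from $A$ to $B$ is a coset $C\cdot Z(B)$ of the $GL_2(\Z)$-centralizer, so an $SL_2(\Z)$-conjugacy exists iff $\det$ is surjective from $Z(B)$ onto $\{\pm1\}$ or $\det C=+1$ already, and this surjectivity is governed by the norm of the fundamental unit, which is $(-1)^q$ via the period matrix $\prod_{i}\bigl(\begin{smallmatrix}a_{k+i}&1\\1&0\end{smallmatrix}\bigr)$.

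Two points to clean up. First, the centralizer of $B$ in $M_2(\Z)$ is an order in $\Q(\lambda)$ that \emph{contains} $\Z[B]$ but need not equal it; this is harmless, since any order in a real quadratic field still has unit group $\{\pm1\}\times\langle\varepsilon\rangle$ with $\det$ equal to the norm, but you should phrase it that way. Second, be careful with the paper's notation: $\hat\Xi(B)$ (and $\bar\Xi(B)$) are defined here as subgroups of $SL_n(\Z)$, so by definition every element of $\hat\Xi(B)$ has determinant $+1$. When $q$ is odd, the period-shift matrix has determinant $-1$ and therefore lies \emph{outside} $\hat\Xi(B)$; it is the generator of the full $GL_2(\Z)$-centralizer modulo $\pm I$, while the generator of $\hat\Xi(B)$ is its square. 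Your computation of the determinant as $(-1)^q$ is exactly right, but the object you are computing it for is not the generator of $\hat\Xi(B)$ in the paper's sense. Once you state it as ``the period matrix generates $Z_{GL_2(\Z)}(B)/\{\pm I\}$ and has determinant $(-1)^q$,'' the three cases follow immediately as you wrote.
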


One can conclude from it, that evenness of the size of a period plays an essential
role. We don't know anything about generalizations of the last theorem to the
higher dimensions.

\begin{prob}Find a complete additional invariant, which could distinguish
classification of hyperbolic toral automorphism under the action of~$GL_{n}(\Z)$
and~$SL_{n}(\Z)$ for the case~$n>2$.\end{prob}

\section{Recovery of a hyperbolic automorphism from a period}

One can ask another very natural question, namely the inverse question. What one can
say about the existence of hyperbolic toral automorphisms for a given period? If it
exists, how many nonconjugate types are?

For the case $n=2$ there is an explicit answer, for further references see
\cite{Ka4}. For $n>2$ there is only a theorem of Karpenkov (in his Ph.~D. thesis)
which says, that if for a given fundamental domain provided with combinatorial data
(integer lengths, integer sines, integer volumes of all dimensions, degrees of
the vertices, etc.) such continued fraction in the sense of Klein exists, then it
is unique.

So we can just repeat a classical problem from the theory of geometric
continued fractions.

\begin{prob}Find a necessary and sufficient conditions for a combinatorial data,
that allows us to construct a Klein's geometric continued fraction, which
fundamental domain has exactly this data. The same question for Klein-Voronoi's
continued fractions.\end{prob}

\section{Acknowledgements} The author is grateful to D.~V.~ Anosov for the
scientific advising, to A.~Yu.~Zhirov for posing the problem, to
O.~N.~Karpenkov for the introduction to the beautiful world of geometric
continued fractions and to A.~V.~Klimenko for his friendly assistance.

\end{document}